\documentclass[sigconf]{acmart}

\title[Bounds for $q$-Recurrence Systems]{Denominator Bounds and Polynomial Solutions for Systems of
  $q$-Recurrences over $\K(t)$ for Constant $\K$} 
\date{2017}

\keywords{recurrence systems, $q$-recurrences, direct solving,
  denominator bound, polynomial solutions, Popov normal form, Ore
  operators}

\acmConference[ISSAC]{International Symposium on Symbolic and Algebraic Computation}{July 25-28, 2017}{Kai\-sers\-lau\-tern}

\author{Johannes Middeke}
\affiliation{%
  \institution{Johannes Kepler University}
  \department{Research Institute for Symbolic Computation (RISC)}
  \streetaddress{Altenbergerstraße 69}
  \city{Linz}
  \postcode{4040}
  \country{Austria}}
\email{jmiddeke@risc.jku.at}
\thanks{This work was supported by the Austrian Science Fund (FWF)
  grant SFB50 (F5009-N15).}

%% \usepackage{common}
% \usepackage[inline]{enumitem}
% \setlist[enumerate,1]{label=(\alph*)}
% \setlist[enumerate,2]{label=(\arabic*), ref=(\alph{enumi}.\arabic*)}
% \setlist[enumerate,3]{label=(\roman*), 
%   ref=(\alph{enumi}.\arabic{enumii}.\roman*)}
% \usepackage{concrete}
% \usepackage{fourier}
% \usepackage{lstmaple}
% \lstset{language=Maple, style=colour}

\theoremstyle{acmdefinition}
\newtheorem{algorithm}[theorem]{Algorithm}
\newtheorem{remark}[theorem]{Remark}

\renewcommand{\autoref}[2][Theorem]{\hyperref[#2]{#1~\ref*{#2}}}

\usepackage{xcolor}
\definecolor{TodoColour}{HTML}{A4B07B}
\definecolor{LinkColour}{HTML}{825419}% {D17804}
\definecolor{CiteColour}{HTML}{825419}% {D17804}
\definecolor{URLColour}{HTML}{825419}
\definecolor{EmphColour}{HTML}{878F1D}
\definecolor{SuperEmphColour}{HTML}{E69155}

\usepackage{listings}
\usepackage[bordercolor=TodoColour, 
backgroundcolor=TodoColour,  
linecolor=TodoColour, 
textsize=todosize]{todonotes}
\hypersetup{
  colorlinks, 
  urlcolor=URLColour,
  linkcolor=LinkColour,
  citecolor=CiteColour}

\newcommand{\becomes}{\leftarrow}

\DeclareMathOperator{\diag}{diag}

\usepackage{accents}
\newcommand{\dtilde}[1]{\accentset{\approx}{#1}}
\newcommand{\K}{\mathbb{K}}
\newcommand{\Q}{\mathbb{Q}}
\newcommand{\sbullet}{\bullet}

\renewcommand{\leq}{\leqslant}
\renewcommand{\geq}{\geqslant}
\renewcommand{\kappa}{\varkappa}
\renewcommand{\rho}{\varrho}

\newcommand{\qqtext}[1]{\qquad\text{#1}\qquad}
\newcommand{\qqand}{\qqtext{and}}

\newcommand{\ordinal}[1]{\raisebox{0.7ex}{\scriptsize#1}}

\newcommand{\Bigmid}{\Bigl.\;\Bigm|\;\Bigr.}

\newcommand{\Mat}[3]{#1^{#2\times#3}}
\newcommand{\MatGr}[2]{\operatorname{GL}_{#2}(#1)}
\newcommand{\CV}[2]{#1^{#2}}
\newcommand{\RV}[2]{#1^{1\times#2}}
\newcommand{\ID}[1][\relax]{\mathbf{1}\ifx#1\relax\relax\else_{#1}\fi}
\def\ZEROAUX#1,#2;{_{#1\times#2}}
\newcommand{\ZERO}[1][\relax]{\mathbf{0}\ifx#1\relax\relax\else\ZEROAUX#1;\fi}

\newcommand{\leqTOP}{\leq_\text{\scshape top}}

\newcommand{\RHS}{right hand side}
\newcommand{\LHS}{left hand side}

\newcommand{\ie}{that is}
\newcommand{\eg}{for example}
\newcommand{\wrt}{with respect to}
\newcommand{\PFD}{partial fraction decomposition}
\newcommand{\PNF}{Popov normal form}
\newcommand{\HNF}{Hermite normal form}
\newcommand{\JNF}{Jacobson normal form}
\newcommand{\GB}{Gr{\"o}bner basis}
\newcommand{\GBs}{Gr{\"o}bner bases}
\newcommand{\TOP}{term over position}
\newcommand{\POT}{position over term}

\usepackage{multicol}

\begin{document}

\begin{abstract}
  We consider systems $A_\ell(t) y(q^\ell t) + \ldots + A_0(t) y(t) =
  b(t)$ of higher order $q$-recurrence equations with rational
  coefficients.  We extend a method for finding a bound on the maximal
  power of $t$ in the denominator of arbitrary rational solutions
  $y(t)$ as well as a method for bounding the degree of polynomial
  solutions from the scalar case to the systems case. The approach is
  direct and does not rely on uncoupling or reduction to a first order
  system. Unlike in the scalar case this usually requires an initial
  transformation of the system.
\end{abstract}

\maketitle

\section{Introduction}\label{sec:intro}

Let $\K$ be a field of characteristic $0$, and let $q \in
\K\setminus\{0\}$ be not a root of unity. Moreover, let $t$ be an
indeterminate over $\K$. We define an automorphism $\sigma$ on the
rational function field $\K(t)$ by setting $\sigma(t) = q t$. We will
refer to $\sigma$ as the \emph{$q$-shift}. Note that with this
definition we have $\sigma(\alpha) = \alpha$ for all $\alpha \in
\K$. This is an easy case of a homogeneous or $\Pi$-extension in the
sense of \cite{Karr81}. In the following we will often let $\sigma$
act on (column) vectors in $\CV{\K(t)}m$. This is to be understood as
component-wise application of $\sigma$.

We consider the recurrence equation
\begin{equation}
  \label{eq:rec}
  A_s \sigma^s(y) + \ldots + A_1 \sigma(y) + A_0 y = b
  \tag{{\scshape rec}}
\end{equation}
where $A_0,\ldots,A_s \in \Mat{\K[t]}mm$ are square polynomial
matrices and $b \in \CV{\K[t]}m$ is a polynomial vector. We will
assume that the matrix $A = A_s \sigma^s + \ldots + A_0$ has full
(left row) rank over the operator ring $\K(t)[\sigma]$ (see
\autoref[Section]{sec:ore} for details). Note that any $q$-recurrence
system can be brought into this form using unimodular transformations
(see, \eg, \cite[Lem.~7.5]{myPhDthesis}; it is easy to verify that the
method discussed there works for general Ore operators -- see
\autoref[Section]{sec:ore} for the definition -- and not just
differential operators). We will briefly explain the idea at the end
of \autoref[Section]{sec:ore}.

\medskip

Generally, one major area where recurrence equations arise is symbolic
summation. See, \eg, \cite{Karr81}, \cite{SchneiderThesis},
\cite{AeqlB}, or \cite{ChyzakSalvy98}. In particular, note that
symbolic simplification of products in \cite{Karr81} and
\cite{SchneiderThesis} is done within so-called homogeneous or
$\Pi$-extension of which our field $\K(t)$ is a special case. Examples
which deal explicitly with $q$-hypergeometric sums are
\cite{PauleRiese:97} or \cite{Riese:03}.

\medskip

Our motivating goal is to find new algorithms to compute all rational
solutions of \eqref{eq:rec}; \ie, we want to know all $y \in
\CV{\K(t)}m$ such that the equation holds for that $y$.

A first step will be to determine a so-called \emph{denominator
  bound}: we are looking for a polynomial $d \in \K[t] \setminus
\{0\}$ such that $d y \in \CV{\K[t]}m$ for every possible rational
solution $y \in \CV{\K(t)}m$. There are comparatively easy ways to
determine all the factors in $d$ except for powers of $t$; cf., \eg,
\cite{AbramovBarkatou1998}, \cite{AbramovKhmelnov2012}, \cite{CPS:08},
or \cite{SchneiderMiddeke2017}. See \cite{Karr81}, \cite{Bron:00} or
\cite{SchneiderThesis} for a discussion on why this distinction is
necessary. Thus, in this work we will only concentrate on finding a
bound on the powers of $t$ in $d$ in a computationally cheap way. The
next step will then be to compute polynomial solutions of the
system. For this, we are computing a bound on the degree of polynomial
solutions $\tilde{y} \in \CV{\K[t]}m$. Once such a bound is
determined, a solution to~\eqref{eq:rec} can be found by making an
ansatz with unknown coefficients and solving the resulting linear
system over $\K$.

Methods for determining such a denominator bound (or universal
denominator as some authors prefer to call it), have already been
discussed in the literature. See, \eg, \cite{AbramovBarkatou1998},
\cite{Barkatou1999}, \cite{CPS:08}, or \cite{AbramovKhmelnov2012}.
Most of these approaches rely on the transformation of the system into
first order; the method in \cite{AbramovKhmelnov2012} seems to be the
only one which deals with systems directly (albeit for shift
recurrences and not the $q$-case; although it might be possible to
adapt the method using ideas from \cite{Abramov2002}). Another way of
determining a denominator bound would be to decouple the system using
cyclic vector methods, the Danilevski-Barkatou-Z{\"u}cher algorithm
(see \cite{Danilevski37}, \cite{Barkatou1993}, \cite{Zurcher94}, or
\cite{BCP13}), or Smith--Jacobson form computations.

In this paper, however, we want to avoid uncoupling and conversion to
first order systems as these often introduce an additional cost and
also tend to blur the structure of the input system. Instead we aim at
computing the denominator bound and degree bound directly from the
matrices of the higher order system~\eqref{eq:rec}. Our hope here is
that this will lead to more efficient algorithms.

\medskip

This paper is structured as follows: In \autoref[Section]{sec:bound},
we discuss how the powers of $t$ in the denominator can be bounded and
how we may determine a bound for the degree of polynomial
solutions. The results will require certain determinants not to be
identically zero. Since this of course cannot be guaranteed for an
arbitrary system, we need to develop an algorithm which transforms a
given system into an equivalent one where the method of
\autoref[Section]{sec:bound} is applicable. This is done in
\autoref[Section]{sec:det} which constitutes the major result of this
paper. The section also includes a comparison with competing methods.
In between, \autoref[Section]{sec:ore} recalls some basic facts about
operators and matrix normal forms which are needed for the
transformation.

\def\myT{{{{\it\lowercase{\Large t}}}}} %% Ugly hack!

\section{Bounding the Power of the Indeterminate and the Degree of
  Polynomial Solutions}\label{sec:bound}

Consider \eqref{eq:rec} where $A_0,\ldots,A_s$ are matrices in
$\Mat{\K[t]}mm$ and where $b \in \CV{\K[t]}m$ is a column vector. For
technical reasons the usefulness of which will become apparent in
\autoref[Section]{sec:det}, we consider the slightly modified system
\begin{equation}
  \label{eq:rec'}
  A_s \sigma^s(y) + \ldots + A_1 \sigma(y) + A_0 y = t^{-\nu} b
  \tag{{\scshape rec'}}  
\end{equation}
where $\nu \geq 0$, \ie, where we allow the \RHS\ to be a rational
vector but only with powers of $t$ in the denominator.

\medskip

We start by considering possible denominators. The following method is
folklore. Similar ideas can be found, \eg, in \cite{Karr81}. Let $y
\in \CV{\K(t)}m$ be an arbitrary solution of~\eqref{eq:rec'} and let
$g t^n \in \K[t]$ be the denominator of $y$ where $t \nmid g$. Using an
(entry-wise) \PFD, we may write $y = u/g + v/t^n$ where $u,v \in
\CV{\K[t]}m$ and where $v = 0$ or $t \nmid v$ (\ie, $t$ does not
divide every entry of $v$). In the first case, $n = 0$ is a candidate
for the power of $t$ in the degree bound. In the other case,
substituting this into~\eqref{eq:rec'} yields
\begin{equation*}
  \sum_{j=0}^s \frac{A_j \sigma^j(u)}{\sigma^j(g)}
  +
  \sum_{j=0}^s \frac{A_j \sigma^j(v)}{q^{n j} t^n}  
  =
  t^{-\nu} b
\end{equation*}
since $\sigma^j(t^n) = q^{n j} t^n$ for all $j$. We may transform the
equation into
\begin{multline*}
  t^{\nu - n} \Bigl(
  \prod_{j=0}^s \sigma^j(g)
  \Bigr)
  \sum_{j=0}^s q^{-n j} A_j \sigma^j(v)
  \\
  =
  \Bigl(
  \prod_{j=0}^s \sigma^j(g)
  \Bigr)
  b
  -
  \sum_{j=0}^s
  \Bigl(
  \prod_{k\neq j} \sigma^k(g)
  \Bigr)
  A_j \sigma^j(u).
\end{multline*}
Note that the \RHS\ is in $\CV{\K[x]}m$, \ie, a polynomial
vector. That means that also the \LHS\ must be polynomial, and this in
turn implies that either $\nu \geq n$ or else that $t$ divides
\begin{math}
  \bigl(                                                          
  \prod_{j=0}^s \sigma^j(g) 
  \bigr) 
  \sum_{j=0}^s q^{-n j} A_j \sigma^j(v).
\end{math}
Assume that the latter case holds. Then, since $t \nmid g$, we also
have $t \nmid \sigma^j(g)$ for all $j$ (or else $t \sim q^{-j} t =
\sigma^{-j}(t) \mid g$). Hence, $t$ cannot divide $\prod_{j=0}^s
\sigma^j(g)$. We obtain
\begin{equation*}
  t \Bigmid \sum_{j=0}^s q^{-n j} A_j \sigma^j(v).
\end{equation*}
In particular, the coefficient vector of $t^0$ in the sum must vanish.

Write now $A_j = A_{j\ell} t^\ell + \ldots + A_{j1} t + A_{j0}$ with
$A_{j0},\ldots, A_{j\ell} \in \Mat{\K}mm$ being constant matrices for
all $j$; and write $v = v_d t^d + \ldots + v_1 t + v_0$ with
$v_0,\ldots,v_d \in \CV{\K}m$. From the assumption that $t \nmid v$ we
know that in particular $v_0 \neq 0$. Using these representations in
the sum above, we find that the constant term is
\begin{equation*}
  0 = \Bigl( \sum_{j=0}^s q^{-nj} A_{j0} \Bigr) v_0.
\end{equation*}
Now, $\Lambda = \sum_{j=0}^s q^{-nj} A_{j0} \in \Mat{\K}mm$ is just a
matrix and since $v_0 \neq 0$ we see that $\Lambda$ must have a
non-trivial kernel. This implies that the determinant of $\Lambda$
vanishes. We may regard $\det \Lambda$ as a polynomial expression in
$q^{-n}$. Therefore, assuming that $\det \Lambda$ does not identically
vanish, we can obtain candidates for $n$ by checking if it has roots
of the form $q^{-n}$.

This observation motivates the following definition.

\begin{definition}[$t$-tail regular]\label{def:t-tail.reg}
  We call the system~\eqref{eq:rec'} \emph{$t$-tail regular} if
  \begin{math}
    \lambda = \det \bigl( \sum_{j=0}^s A_{j0} x^j \bigr) \in \K[x]
  \end{math}
  is not the zero polynomial.
\end{definition}

%% \todo[inline]{Should (or does) $\lambda$ have a fancy name?}

Summarising the argumentation so far, we obtain
\autoref[Theorem]{thm:deg.bound} below. It remains to deal with the
case of systems which are not $t$-tail regular. We will do so in
\autoref[Section]{sec:det}.

\begin{theorem}\label{thm:deg.bound}
  Assume that the system~\eqref{eq:rec'} is $t$-tail regular. If $y
  \in \CV{\K(t)}m$ is a rational solution of~\eqref{eq:rec'} and if
  $t^n$ divides the denominator of $y$, then
  \begin{enumerate}
  \item $n = 0$,
  \item or $n \leq \nu$,
  \item or $q^{-n}$ is a root of 
    \begin{math}
      \lambda = \det \bigl( \sum_{j=0}^s A_{j0} x^j \bigr) \in \K[x].
    \end{math}
  \end{enumerate}
  Consequently, we obtain a bound on $n$ by taking the maximum of
  those values. \qed
\end{theorem}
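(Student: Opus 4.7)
The plan is to formalise the case analysis already sketched in the preceding discussion. Given a rational solution $y \in \CV{\K(t)}m$, I would apply the entry-wise partial fraction decomposition with respect to the irreducible factor $t$, writing $y = u/g + v/t^n$ with $u,v \in \CV{\K[t]}m$, $t \nmid g$, and either $v = 0$ or $t \nmid v$. If $v = 0$, then $t$ does not actually appear in the denominator of $y$, so we may take $n = 0$ and case (1) holds. Henceforth assume $v \neq 0$, so $t \nmid v$ (meaning $t$ does not divide every entry).

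Substituting this decomposition into~\eqref{eq:rec'} and using $\sigma^j(t^n) = q^{nj}t^n$, I would then clear denominators by multiplying by $t^\nu \prod_{j=0}^s \sigma^j(g)$, arriving at the displayed identity in the excerpt. Since the \RHS\ lies in $\CV{\K[t]}m$, the \LHS\ must be polynomial as well. Either $\nu \geq n$, which gives case (2), or else the power of $t$ coming from the factor $t^{\nu - n}$ on the \LHS\ forces
\begin{equation*}
  t \Bigmid \Bigl(\prod_{j=0}^s \sigma^j(g)\Bigr)\sum_{j=0}^s q^{-nj} A_j \sigma^j(v).
\end{equation*}
A short coprimality observation — namely that $t \nmid g$ implies $t \nmid \sigma^j(g)$ for all $j$, since otherwise $\sigma^{-j}(t) = q^{-j} t \sim t$ would divide $g$ — allows me to cancel the product and conclude $t \mid \sum_j q^{-nj} A_j \sigma^j(v)$.

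The final step is to extract the constant coefficient of this sum. Writing $A_j = A_{j0} + A_{j1} t + \ldots$ and $v = v_0 + v_1 t + \ldots$ with $v_0 \neq 0$, the coefficient of $t^0$ equals $\Lambda v_0$ where $\Lambda = \sum_{j=0}^s q^{-nj} A_{j0}$. Vanishing of this coefficient together with $v_0 \neq 0$ yields $\det \Lambda = 0$, \ie, $q^{-n}$ is a root of $\lambda(x) = \det(\sum_j A_{j0} x^j)$. The $t$-tail regularity hypothesis ensures that $\lambda$ is not identically zero, so there are only finitely many such roots, which yields case (3). The bound on $n$ is then simply the maximum among $0$, $\nu$, and the largest $n$ obtainable from a root $q^{-n}$ of $\lambda$. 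The main subtlety in writing this out cleanly is keeping the case split between $\nu \geq n$ and $\nu < n$ explicit, and verifying the coprimality $t \nmid \sigma^j(g)$ so that the divisibility conclusion transfers from the full product to the operator sum.
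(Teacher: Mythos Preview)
Your proposal is correct and follows exactly the argument the paper gives in the discussion preceding the theorem: the same partial fraction decomposition $y = u/g + v/t^n$, the same clearing of denominators, the same case split on $\nu \geq n$ versus $\nu < n$, the same coprimality observation $t \nmid \sigma^j(g)$, and the same extraction of the constant term leading to $\det\Lambda = 0$. There is nothing to add.
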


If $\K$ happens to be of the form $\mathbb{F}(q)$ where $\mathbb{F}$
is a computable field and $q$ is transcendental over $\mathbb{F}$,
then the method of \cite[Ex.~1]{AbramovPaulePetkovsec} can be applied
to find all roots of $\lambda$ of the required form. In the case that
$\K = \Q$, we can use the method of
\cite[Sect.~3.3]{BauerPetkovsek1999} for that.

\begin{example}\label{ex:bound}
  Let $\K = \Q$ and $q = 2$. We consider the system
  \begin{multline*}
    \begin{pmatrix}
      8 & 0 \\
      8 & 0
    \end{pmatrix}
    \begin{pmatrix}
      y_1(4 t) \\
      y_2(4 t) \\
    \end{pmatrix}
    + 
    \begin{pmatrix}
      -16 t + 4 & 8 \\
      -16 t^2 + 16 t - 12 & 8 
    \end{pmatrix}
    \begin{pmatrix}
      y_1(2 t) \\
      y_2(2 t) \\
    \end{pmatrix}
    \\
    + 
    \begin{pmatrix}
      16 t - 4 & -8 t^3 - 1 \\
      16 t^2 - 8 t + 4 & -8 t^4 - 1
    \end{pmatrix}
    \begin{pmatrix}
      y_1(t) \\
      y_2(t) \\
    \end{pmatrix}
    = 0.
  \end{multline*}
  Here, $\nu = 0$. Computing the polynomial $\lambda$ yields
  \begin{equation*}
    \lambda =
    \det \begin{pmatrix}
      8 x^2 + 4 x - 4 & 8 x - 1 \\
      8 x^2 - 12 x + 4 & 8 x - 1 \\
    \end{pmatrix}
    = 128 x^2 - 80 x + 8.
  \end{equation*}
  The roots of $\lambda$ are $1/2 = q^{-1}$ and $1/8 = q^{-3}$. That
  yields the upper bound $3$ for $n$. Note that this fits the actual
  solutions well which are generated by
  \begin{equation*}
    \begin{pmatrix}
      1 \\ t^{-3} 
    \end{pmatrix}
    \qqand
    \begin{pmatrix}
      t^{-1} \\ t^{-3}
    \end{pmatrix}.
  \end{equation*}
  (We can easily check that the two vectors are indeed solutions; and
  -- after row reduction -- \cite[Thm.~6]{AbramovBarkatou2014} gives
  the dimension of the solution space as $2$.)
\end{example}

\medskip

We turn our attention to polynomial solutions now. The approach is
rather similar to the degree bounding and has been discussed in the
literature before, \eg, in
\cite[Sect.~2]{AbramovPaulePetkovsec}. Assume that $y \in \CV{\K[t]}m$
is a solution of~\eqref{eq:rec'}. We make an ansatz
\begin{equation*}
  y = y_n t^n + \ldots + y_1 t + y_0
\end{equation*}
where $y_0, \ldots, y_n \in \CV{\K}m$. We assume that $y_n \neq 0$,
\ie, that $n$ is the degree of $y$. In addition, let $b = b_\kappa
t^\kappa + \ldots + b_1 t + b_0$ with $b_0, \ldots, b_\kappa \in
\CV{\K}m$; and write once more
\begin{math}
  A_j = A_{j\ell} t^\ell + \ldots + A_{j1} t + A_{j0}
\end{math}
with $A_{j0},\ldots,A_{j\ell} \in \Mat{\K}mm$ for all $j$. We assume
that $b_\kappa \neq 0$ and that $A_{j\ell} \neq \ZERO$ for at least
one $j$.  Substituting all of this into~\eqref{eq:rec'} gives
\begin{multline*}
  \sum_{j=0}^s 
  \Bigl( \sum_{i=0}^\ell A_{ji} t^i \Bigr) 
  \sigma^j\Bigl( \sum_{k=0}^n y_k t^k \Bigr)
  \\
  = 
  \sum_{j=0}^s 
  \sum_{i=0}^\ell 
  \sum_{k=0}^n
  q^{jk} t^{i+k} A_{ji} y_k
  = 
  t^{-\nu} \sum_{j=0}^\kappa b_j t^j.
\end{multline*}
If the \RHS\ contains negative powers of $t$, then there cannot be a
polynomial solution. Else, the degree (in $t$) of the \LHS\ is at most
$\ell + n$. We compute the coefficient of $t^{\ell + n}$ on both
sides. This yields (since $i = \ell$ and $k = n$)
\begin{equation*}
  %% What is the right term on the RHS
  \Bigl( \sum_{j=0}^s A_{j\ell} q^{n j} \Bigr) y_n = b_{\ell+n+\nu}.
\end{equation*}
There are two cases:
\begin{enumerate}
\item Either $\kappa \geq \ell + n + \nu$ (\ie, $n \leq \kappa - \nu -
  \ell$),
\item or $\kappa < \ell + n + \nu$.
\end{enumerate}
In the first case, we do already have a bound on $n$ (namely $\kappa -
\nu - \ell$). In the second case, the \RHS\ of the equation for the
coefficient of $t^{\ell+n}$ is zero. Just as for the denominator
bound, we see that the \LHS\ contains a matrix with a non-trivial
kernel. Again, its determinant is a polynomial expression---this time
in $q^n$. We make an analogous definition.

\begin{definition}[$t$-head regular]
  We call the system~\eqref{eq:rec'} \emph{$t$-head regular} if the
  polynomial
  \begin{math}
    \rho = \det \bigl( \sum_{j=0}^s A_{j\ell} x^j \bigr)
    \in \K[x]
  \end{math}
  is not identically zero.
\end{definition}

%% \todo[inline]{Is $\rho$ the same as (or analogous to) the indicial
%%  polynomial?}

The computations above have show that for a $t$-head regular system
$q^n$ is a root of $\rho$. Thus, we can again obtain all the
candidates for the degree $n$ of $y$ by computing the roots of
$\rho$. The following theorem summarises these findings.

\begin{theorem}\label{thm:deg}
  Assume that the system~\eqref{eq:rec'} is $t$-head regular. Let
  $\ell$ be the maximum of $\deg_t A_0, \ldots, \deg_t A_s$ and let
  $\kappa = \deg_t b$. If $y \in \CV{\K[t]}m$ is a polynomial solution
  of degree $n$, then
  \begin{enumerate}
  \item either $n \leq \kappa - \nu - \ell$,
  \item or $q^n$ is a root of
    \begin{math}
      \rho = \det \bigl( \sum_{j=0}^s A_{j\ell} x^j \bigr)
      \in \K[x].
    \end{math}
  \end{enumerate}
  As in \autoref[Theorem]{thm:deg.bound}, we obtain a finite number of
  candidates for $n$.
\end{theorem}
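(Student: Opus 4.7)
The plan is to formalise the computation already sketched in the paragraph preceding the theorem statement. First I would fix a polynomial solution $y = y_n t^n + \ldots + y_0$ of degree $n$ (so $y_n \neq 0$) and expand every $A_j$ as $A_{j\ell} t^\ell + \ldots + A_{j0}$, the right hand side as $t^{-\nu}(b_\kappa t^\kappa + \ldots + b_0)$, and substitute everything into~\eqref{eq:rec'}. Using $\sigma^j(t^k) = q^{jk} t^k$, both sides become explicit Laurent polynomials in $t$ with coefficients in $\CV{\K}m$, and comparing coefficients gives a system of linear equations over $\K$.

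Next I would read off the coefficient of the highest possible power $t^{\ell+n}$ on the left hand side. Among the triple sum $\sum_{j,i,k} q^{jk} t^{i+k} A_{ji} y_k$, the monomial $t^{\ell+n}$ is attained only when $i = \ell$ and $k = n$ (any larger $k$ is impossible by the choice of $n$, and for $k = n$ one must take $i = \ell$), giving the coefficient $\bigl(\sum_{j=0}^s q^{nj} A_{j\ell}\bigr) y_n$. On the right hand side the coefficient of $t^{\ell+n}$ equals $b_{\ell+n+\nu}$ if $0 \leq \ell+n+\nu \leq \kappa$ and is zero otherwise.

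Then I would split into the two cases stated in the theorem. If $\ell + n + \nu \leq \kappa$, that is exactly condition~(1). Otherwise $b_{\ell+n+\nu} = 0$, so
\begin{equation*}
  \Bigl( \sum_{j=0}^s A_{j\ell} q^{nj} \Bigr) y_n = 0,
\end{equation*}
and since $y_n \neq 0$ the matrix $\sum_{j=0}^s A_{j\ell} q^{nj}$ has nontrivial kernel, hence vanishing determinant. That determinant is precisely $\rho(q^n)$, so by $t$-head regularity of the system $\rho$ is a nonzero polynomial and $q^n$ is among its finitely many roots, giving condition~(2).

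Finally I would note the finiteness statement: condition~(1) bounds $n$ directly by the constant $\kappa - \nu - \ell$, while condition~(2) restricts $q^n$ to the finite root set of the nonzero polynomial $\rho$, and as in the discussion after \autoref[Theorem]{thm:deg.bound} the candidate exponents $n$ can be extracted from these roots by the algorithms of~\cite{AbramovPaulePetkovsec} or~\cite{BauerPetkovsek1999}. The only mildly delicate point is the bookkeeping that shows $i = \ell$, $k = n$ is really the unique contribution to $t^{\ell+n}$ on the left; everything else is mechanical.
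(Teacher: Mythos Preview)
Your proposal is correct and follows exactly the argument the paper itself gives in the paragraphs preceding the theorem: expand $y$, the $A_j$, and $b$ in powers of $t$, substitute using $\sigma^j(t^k)=q^{jk}t^k$, read off the coefficient of $t^{\ell+n}$ to obtain $\bigl(\sum_j A_{j\ell}q^{nj}\bigr)y_n = b_{\ell+n+\nu}$, and then split according to whether $\ell+n+\nu\leq\kappa$ or not. There is no additional idea in the paper's proof beyond what you have written.
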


\begin{example}\label{ex:deg}
  We continue \autoref[Example]{ex:bound}. We have already seen that
  rational solutions have the shape $y = t^{-3} z$ for some $z \in
  \CV{\K[t]}m$.\footnote{We did not explicitly show that the
    denominator of $y$ does not have any other factors apart from $t$
    -- however, this is the case here since we do know the complete
    solution space.} Substituting this into the equation and clearing
  denominators yields
  \begin{multline*}
    \begin{pmatrix}
      1 & 0 \\
      1 & 0 
    \end{pmatrix}
    \begin{pmatrix}
      z_1(4 t) \\
      z_2(4 t)
    \end{pmatrix}
    +
    \begin{pmatrix}
      4 - 16 t & 8 \\
      12 + 16 t - 16 t^2 & 8 
    \end{pmatrix}
    \begin{pmatrix}
      z_1(2 t) \\
      z_2(2 t)
    \end{pmatrix}
    \\
    +
    \begin{pmatrix}
      128 t - 32 & -64 t^3 - 8 \\
      128 t^2 - 64 t + 32 & - 64 t^4 - 8
    \end{pmatrix}
    \begin{pmatrix}
      z_1(t) \\
      z_2(t)
    \end{pmatrix}
    = 0.
  \end{multline*}
  The maximal degree in $t$ is $\ell = 4$ and thus, we have
  \begin{equation*}
    \rho = \det
    \begin{pmatrix}
      0 & 0 \\
      0 & -64
    \end{pmatrix}
    = 0.
  \end{equation*}
  Thus, \autoref[Theorem]{thm:deg} is not applicable here. We will
  revisit the example in \autoref[Section]{sec:det}.
\end{example}

\section{Ore Operators and Matrix Normal Forms}\label{sec:ore}

This section is devoted to introducing some concepts which are
necessary in order to deal with the cases $\lambda = 0$ or $\rho =
0$. This section introduces the algebraic model which we are going to
employ as well as some necessary background information and other
results.

\medskip

In \autoref[Section]{sec:det}, we will try to transform the
system~\eqref{eq:rec'} into an equivalent form such that the new
$\lambda$ or $\rho$ becomes non-zero. During this, we are allowed to
use the following transformations:
\begin{enumerate}
\item multiply a row of the system by a rational function in $\K(t)$;
\item add a $\K(t)$-linear combination of shifted versions of one row
  to another row.
\end{enumerate}
We will apply the transformations in such a way that the coefficients
on the \LHS\ of~\eqref{eq:rec'} will always be polynomial
matrices. However, the \RHS\ might contain rational functions in
$t$. 

It will be convenient to express the transformations using the
language of \emph{Ore operators}. Roughly speaking, these are a class
of non-commutative polynomials. They are named after their first
introduction by \O.~Ore in \cite{Ore33}; a first interpretation of
them as operators seems to be~\cite{JacPLT}. Ore operators have been
used to model differential or difference equations (see \cite{BP96}
for an introduction) and have been applied to problems in symbolic
summation (cf.\ \cite{ChyzakSalvy98}). Implementations exist for
\textsc{Maple} (\eg, \cite{AbramovLeLi2005}) and \textsc{Mathematica}
(\eg, \cite{KoutschanPhD}).

We will only require a special case where the derivative is the zero
map. These rings are sometimes also referred to as \emph{skew
  polynomials}. For us, this is simply the (non-commutative) ring
\begin{equation*}
  \K[t,\sigma] 
  = \{ \alpha_d(t) \sigma^d + \ldots \alpha_1(t) \sigma + \alpha_0 
  \mid % d \geq 0 \text{ and } 
  \alpha_0, \ldots, \alpha_d \in \K[t] \}
\end{equation*}
which consist of polynomial expressions in $\sigma$ with coefficients
being polynomials in $t$. The addition in this ring is the same as for
regular polynomials. The multiplication, however, is governed by the
\emph{commutation rule}
\begin{equation*}
  \sigma t = q t \sigma
\end{equation*}
(and consequently $\sigma t^{-1} = q^{-1} t^{-1} \sigma$ in the
extensions which we introduce below). We will usually refer to the
elements of $R$ as \emph{operators}. These operators act on $\K(t)$ in
the usual way: Denoting the action of $a \in \K[t,\sigma]$ on $y \in
\K(t)$ by $a \sbullet y$ we have $t\sbullet y(t) = t y(t)$ and
$\sigma\sbullet y(t) = y(q t)$.

We will also need to consider a related ring where we allow rational
functions in $t$ in the coefficients of our operators. The
corresponding ring $\K(t)[\sigma]$ may be constructed from
$\K[t,\sigma]$ since the non-zero polynomials in $t$ form a so-called
(left or right) Ore set in $\K[t,\sigma]$ (for a definition see, \eg,
\cite[p.~177]{cohnRT}). We have $\K[t,\sigma] \subseteq
\K(t)[\sigma]$. The action of $\K[t,\sigma]$ on $\K(t)$ can easily be
extended to an action of $\K(t)[\sigma]$.

We may now associate the system~\eqref{eq:rec'} with a matrix $A \in
\Mat{\K[t,\sigma]}mm$ writing it as
\begin{equation*}
  \label{eq:2b}
  A \sbullet y = t^{-\nu} b.
\end{equation*}
Doing so, we may identify the transformations above as multiplication
of the system by a certain matrix from the left. More precisely, each
of the allowed transformations correspond to multiplication by a
\emph{unimodular} matrix $Q \in \Mat{\K(t)[\sigma]}mm$, \ie, a matrix
which has an inverse $Q^{-1} \in \Mat{\K(t)[\sigma]}mm$ in the same
matrix ring. We denote the set of unimodular matrices by
$\MatGr{\K(t)[\sigma]}m$. Note that $Q$ transforms the
system~\eqref{eq:rec'} into
\begin{equation*}
  ( Q A ) \sbullet y
  = Q \sbullet t^{-\nu} b
\end{equation*}
where both the left and the right hand side are modified. It is easy
to see that the solutions of the original and the transformed system
are exactly the same if $Q$ is unimodular. We will call the two
systems (and by extension also the matrices $A$ and $Q A$)
\emph{equivalent}.

We would like to point out that being unimodular is a stronger
property than merely having full (left row) rank where we understand
the rank as the maximal number of linearly independent rows over
$\K(t)[\sigma]$ -- see also~\cite[Def.~2.1,
Thm.~A.2]{BeckermannChengLabahn2006}. For instance, the $1$-by-$1$
matrix $(\sigma) \in \Mat{\K(t)[\sigma]}11$ has full rank, but no
inverse in $\Mat{\K(t)[\sigma]}11$. We will call a matrix of full rank
\emph{regular} in order to distinguish this case.

\medskip

Before we are able to propose an algorithm for transforming the
system~\eqref{eq:rec'} into an equivalent system which is $t$-tail
regular or $t$-head regular, we need to introduce one more
concept. First, we would like to point out that the subring
$\K[\sigma]$ of $\K[t,\sigma]$ may actually be regarded as a
commutative polynomial ring because $\sigma$ commutes with all
elements of $\K$. Therefore, it makes sense to look at matrix normal
forms for polynomial matrices. We will again use the terms unimodular
and regular, this time referring to matrices which have an inverse in
$\Mat{\K[\sigma]}mm$ or which have full rank over $\K[\sigma]$,
respectively.

The normal form which we are going to employ here is the so-called
\emph{\PNF}. The most succinct way to describe it is using \GBs\ for
modules. See \cite{myPhDthesis} for a detailed explanation; or see
\cite{Villard96} or Popov's original paper \cite{Popov72} for a more
traditional definition. See, \eg, \cite{AdamsLoustaunau1994} for more
details on \GBs\ over modules. We introduce the \emph{\TOP\ ordering}
on the polynomial module $\RV{\K[\sigma]}m$ by saying that $e_i
\sigma^a \leqTOP e_j \sigma^b$ if $a < b$ or $a = b$ and $i \geq
j$. Here, $e_i$ denotes the $i$\ordinal{th}\ unit vector. We say that
a matrix is in \PNF\ if its rows form a minimal \GB\ \wrt\
$\leqTOP$. It is possible to prove that for every matrix $M \in
\Mat{\K[\sigma]}mm$ there exists a unimodular matrix $Q \in
\MatGr{\K[\sigma]}m$ over $\K[\sigma]$ such that\footnote{That is,
  here unimodularity means that the inverse of $Q$ is a polynomial
  matrix $Q^{-1} \in \Mat{\K[\sigma]}mm$.}
\begin{equation*}
  Q M =
  \begin{pmatrix}
    P \\ \ZERO
  \end{pmatrix}
\end{equation*}
where $P \in \Mat{\K[\sigma]}{\ell}m$ is in \PNF. It can further be
shown that a matrix in \PNF\ has maximal row rank. Also, using \GB\
division (for each row of $C$) we can write every matrix $C \in
\Mat{\K[\sigma]}{\ell}m$ as
\begin{math}
  C = X P + Y
\end{math}
where $X \in \Mat{\K[\sigma]}\ell\ell$ and where $Y \in
\Mat{\K[\sigma]}\ell{m}$ has no rows which are reducible by the \PNF\
$P$ (again in the \GB\ sense). This offers an algorithmic way to
determine whether $C$ is a left multiple of $P$. We are going to need
this property in \autoref[Algorithm]{alg:lambda} and
\autoref[Algorithm]{alg:rho} below.

We would like to point out, that despite its definition in terms of
\GBs, the \PNF\ can easily be computed in polynomial time. In fact,
even the naive method which is based on row reduction will be
polynomial (cf.\ \cite[Lem.~5.14]{myPhDthesis}). See, \eg,
\cite[Cor.~6.1]{BeckermannLabahnVillard2006} for a faster method.

It is possible to use other row equivalent forms instead of the \PNF;
as long as they have a similar division property. One possible example
is the \HNF\ which is a \GB\ \wrt\ the \POT\ ordering (see, \eg,
\cite{myPhDthesis} for a details on why the \HNF\ is a \GB). However,
to our best knowledge, with current methods computing a \PNF\ can be
done more efficiently than computing a \HNF.

\medskip

We briefly recall the reasoning of \cite[Lem~7.5]{myPhDthesis} to
explain how the \PNF\ can also be used to remove redundancies from a
system: Starting with a general matrix $A \in \Mat{\K[t,\sigma]}mn$ --
\ie, with a matrix which is not necessarily square or of full row rank
-- we first compute the \emph{column} \PNF\ of $A$. It is defined
analogously but for right modules of column vectors instead of left
modules of row vectors. Moreover, algorithms to compute the (row)
\PNF\ and the corresponding transformation matrix can easily be
translated to the column \HNF. We obtain a representation
\begin{equation*}
  A U =
  \begin{pmatrix}
    \tilde{A} & \ZERO
  \end{pmatrix}
\end{equation*}
where $U \in \MatGr{\K(t)[\sigma]}n$ and $\tilde{A}$ has full column
rank. Next, we compute the row \PNF\ of $A U$ which yields
\begin{equation*}
  Q A U =
  \begin{pmatrix}
    \dtilde{A} & \ZERO \\
    \ZERO & \ZERO
  \end{pmatrix}
\end{equation*}
where $Q \in \MatGr{\K(t)[\sigma]}m$ and $\dtilde{A}$ has full row
rank. Since the row transformations do not change the column rank and
since (left) row and (right) column rank coincide by
\cite[Thm.~8.1.1]{cohn}, we conclude that $\dtilde{A}$ must be square
and of full row rank. 

Now, as $Q$ and $U$ are both unimodular, the system $A \sbullet x = b$
has a solution if and only if the system $\dtilde{A} \sbullet y = c$
has a solution and $w = 0$ where
\begin{equation*}
  x = Q
  \begin{pmatrix}
    y \\ z
  \end{pmatrix}
  \qqand
  Q \sbullet b =
  \begin{pmatrix}
    c \\ w
  \end{pmatrix}
\end{equation*}
(with the blocks matching those of $Q A U$). Thus, either the
compatibility condition $w = 0$ does not hold and we know that the
system is not solvable; or it suffices to concentrate on the system
$\dtilde{A} \sbullet y = c$ which is of the correct form for the
method presented in this paper. After solutions $y$ have been found,
we can easily translate them into solutions $x$ of the original system
using the matrix $Q$. (The vector $z$ does not have any conditions
imposed on it and contributes free variables to the solution $x$.)

It is not necessary to employ the \PNF\ in order to remove the
redundancies of the system. Any pair of rank revealing column and row
transformation would be sufficient. For instance, we could use the
\HNF. Since we do not need the division property, also row and column
reduction (see \cite{BeckermannChengLabahn2006} for a definition and
an efficient algorithm) or EG-eliminations (see \cite{Abramov:99})
could be used.

\section{Desingularising the  Determinants}\label{sec:det}

With the notation and concepts from \autoref[Section]{sec:ore}, we are
finally prepared to deal with the case of identically vanishing
determinants in \autoref[Section]{sec:bound}. This section here will
introduce an algorithm for transforming any system into an equivalent
one (using elementary row transformations over $\K(t)[\sigma]$) where
$\lambda$ or $\rho$ do not vanish. This constitutes the main result of
this paper.

\medskip

We write system~\eqref{eq:rec'} in operator form $A \sbullet y =
t^{-\nu} b$ where $A \in \Mat{\K[t,\sigma]}mm$ is an operator matrix
as explained in \autoref[Section]{sec:ore}. We express $A$ as
\begin{equation*}
  A = t^\ell \tilde{A}_\ell + \ldots + t \tilde{A}_1 + \tilde{A}_0
\end{equation*}
where $\tilde{A}_0, \ldots, \tilde{A}_\ell \in \Mat{\K[\sigma]}mm$ are
matrices in $\sigma$ alone. We will call $\tilde{A}_0$ the
\emph{$t$-trailing matrix} of $A$ and -- assuming that $\tilde{A}_\ell
\neq \ZERO$ -- we will call $\tilde{A}_\ell$ the \emph{$t$-leading
  matrix of $A$}. It is easy to see that (using the notation of
\autoref[Section]{sec:bound}) we have
\begin{equation*}
  \tilde{A}_0 = \sum_{j=0}^s A_{j0} \sigma^j
  \qqand
  \tilde{A}_\ell = \sum_{j=0}^s A_{j\ell} \sigma^j.
\end{equation*}
In other words, $\lambda$ and $\rho$ are the same as the determinants
of $\tilde{A}_0$ and $\tilde{A}_\ell$ -- except that we used $x$
instead of $\sigma$ as the name for the variable. (As explained in
\autoref[Section]{sec:ore}, $\K[\sigma]$ can be interpreted as
univariate commutative polynomial ring.) Thus, the task of
transforming the system into an equivalent one (using a unimodular
multiplier) with $\lambda$ or $\rho$ non-zero can be equivalently
described as the task of having the $t$-trailing or the $t$-leading
matrix being regular.

The definition of the $t$-trailing matrix in particular is also the
reason why we allowed a denominator for the \RHS\
of~\eqref{eq:rec'}. Should $\tilde{A}_0$ be the zero matrix, then we
simply divide the entire equation by a suitable power of $t$ in order
rectify that problem.

\medskip

There are similar works which consider recurrence systems and place
requirements on certain leading or trailing matrices. For instance,
see the method in \cite{AbramovKhmelnov2012} or the concept of
\emph{strong row-reduction} in
\cite[Def.~4]{AbramovBarkatou2014}. There is, however, an essential
difference between those approaches and the one presented here: We
consider the leading and trailing matrices \wrt\ $t$ as the main
variable while those other approaches consider $\sigma$ to be the main
variable (and are consequently dealing with leading and trailing
matrices that are in $\K(t)$).

The choice of variable does make a big difference. While we can easily
do a non-commutative version of row-reduction (\ie, working over
$\K(t)[\sigma]$) in order to work on the leading coefficient \wrt\
$\sigma$, say; simply switching the variables would catapult us into
$\K(\sigma)[t]$. The later ring, however, contains arbitrary
denominators in $\sigma$ and therefore does not have a natural action
on $\K(t)$ which extends the action of $\K[t,\sigma]$. (To see that
let $p \in \K[\sigma]$ and $f \in \K(t)$. If we want to compute
$p^{-1}\bullet f$, then we have to find $g \in \K(t)$ such that
$p\bullet g = f$. However, not all equations of that form have
rational solutions.)

We will thus need to develop a different approach which does not
require division by expressions in $\sigma$.

\smallskip

Also note that our algorithm bears some similarities to
\emph{EG-eliminations} as described in \cite{Abramov:99}. In
particular, the idea to reduce the trailing matrix $\tilde{A}_0$ and
to shift down coefficients from the higher matrices whenever a row of
$\tilde{A}_0$ becomes zero are the same. The difference is again the
choice of the main variable. The transformations used during
EG-eliminations are unimodular over $\K(t)[\sigma, \sigma^{-1}]$ (the
Laurent skew polynomials\footnote{These are well-defined since the
  powers of $\sigma$ form an Ore set in $\K(t)[\sigma]$ -- see, \eg,
  \cite[Chptr.~5]{cohnRT}}) and do thus not alter the solutions;
however switching the main variable does again expose us to arbitrary
fractions in $\sigma$.

The need to keep all transformations unimodular over $\K(t)[\sigma]$
also means that we cannot proceed row by row as \cite{Abramov:99} does
where the $\sigma$-trailing or -leading matrix is brought into
trapezoidal form one row at a time. Instead we always have to consider
the entire $t$-trailing or -leading matrix for each elimination
step. In particular, we cannot force the ``$t$-width'' (as defined in
\cite{Abramov:99}) of the lower rows to decrease.

\medskip

We are going to deal with the $t$-trailing matrix first. Below we
state an algorithm which we claim will transform the system into the
desired shape. Note that while we do not explicitly compute the
transformation matrix $Q$, this could be easily done by applying all
the transformations done to $A$ in parallel to the identity matrix
$\ID[m]$. Alternatively, we may also apply them directly to the \RHS\
of~\eqref{eq:rec'}.

\begin{algorithm}\label{alg:lambda}~
  \begin{description}
  \item[Input:] A regular matrix $A \in \Mat{\K[t,\sigma]}mm$.
  \item[Output:] An equivalent matrix $B \in \Mat{\K[t,\sigma]}mm$
such that the $t$-trailing matrix of $B$ is regular.
  \item[Procedure:]
    \begin{enumerate}
    \item\label{outer} Compute the $t$-trailing matrix $\tilde{A}_0$
      of $A$. If $\det \tilde{A}_0 \neq 0$, then return $A$.
    \item\label{X} Else, compute the \PNF\ of $\tilde{A}_0$ and let $X
      \in \MatGr{\K[\sigma]}m$ be the corresponding transformation
      matrix and let $r$ be the rank of $\tilde{A}_0$.\footnote{That
        is, the number of nonzero rows.} Set $A \becomes X A$ and
      write the new trailing matrix of $A$ as
      \begin{equation*} \tilde{A}_0 =
        \begin{pmatrix} \tilde{B}_0 \\ \ZERO
        \end{pmatrix}
      \end{equation*} where $\tilde{B}_0 \in \Mat{\K[\sigma]}rm$ is in
      \PNF.
    \item\label{inner} Let $T = \diag(\ID[r], t^{-1} \ID[m-r])$ and
      set $A \becomes T A$.\footnote{This shifts the lower rows of $A$
        by $t^{-1}$.} The new trailing matrix is now
      \begin{equation*} \tilde{A}_0 =
        \begin{pmatrix} \tilde{B}_0 \\ \tilde{C}_0
        \end{pmatrix}
      \end{equation*} with the same $\tilde{B}_0$ as before and some
      $\tilde{C}_0 \in \Mat{\K[\sigma]}{(m-r)}m$.
    \item\label{GB} Use \GBs\ reduction of $\tilde{C}_0$ by
      $\tilde{B}_0$ trying to eliminate all the rows in
      $\tilde{C}_0$. Write the result as $\tilde{C}_0 = X \tilde{B}_0
      + Y$ with a matrix $X \in \Mat{\K[\sigma]}rr$ and a matrix $Y
      \in \Mat{\K[\sigma]}{(m-r)}m$. Let
      \begin{equation*} Q =
        \begin{pmatrix} \ID[r] & \ZERO \\ -X & \ID[m-r]
        \end{pmatrix}
      \end{equation*} and set $A \becomes Q A$.
    \item If $Y = \ZERO$, then continue the inner loop with
      \autoref[Step]{inner}. Else, go back to the outer loop in
      \autoref[Step]{outer}.
    \end{enumerate}
  \end{description}
\end{algorithm}

We would like to point out that for any matrix $X \in
\Mat{\K[\sigma]}mm$ we have $X t = t \overline{X}$ for some
$\overline{X} \in \Mat{\K[\sigma]}mm$ since $\sigma$ is the
$q$-shift. Thus, in \autoref[Step]{X} of
\autoref[Algorithm]{alg:lambda} the product $X A$ has the form $t^\ell
X^{(\ell)} \tilde{A}_\ell + \ldots + t X^{(1)} \tilde{A}_1 + X
\tilde{A}_0$ for some matrices $X^{(1)}, \ldots, X^{(\ell)} \in
\Mat{\K[\sigma]}mm$. In other words, $X$ acts on the $t$-trailing
matrix of $A$ without interference from the other parts. Consequently,
if after applying the transformation by $X$ the lower rows of the
$t$-trailing matrix $\tilde{A}_0$ become zero, this means that the
lower rows of the entire matrix $A$ must be divisible by $t$. This
explains why the transformation by $T$ in \autoref[Step]{inner} will
still result in a matrix $A \in \Mat{\K[t,\sigma]}mm$. A similar
reasoning as in \autoref[Step]{X} also holds true for
\autoref[Step]{GB}: again the transformation of $A$ by $Q$ acts on the
$t$-trailing matrix without disturbance from the other
coefficients. Its effect is to replace the lower block $\tilde{C}_0$
of $\tilde{A}_0$ by $Y$. 

Note that this reasoning only works because of our special choice of
$\sigma$ and would not be possible with more general automorphisms. In
particular, the algorithm is not applicable for the normal shift case.

\begin{theorem}\label{thm:lambda}
  \autoref[Algorithm]{alg:lambda} is correct and terminates.
\end{theorem}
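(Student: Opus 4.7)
I would break the proof into three parts: correctness (the intermediate $A$ stays in $\Mat{\K[t,\sigma]}mm$ and remains equivalent to the original) and termination of the outer and inner loops. For \textbf{correctness}, the three transformation matrices $X$, $T$, and $Q$ are all unimodular over $\K(t)[\sigma]$ ($X$ and $Q$ even over $\K[\sigma]$), so the $\K(t)[\sigma]$-row span of $A$ is preserved and the transformed system is equivalent. That $TA$ stays polynomial follows from the observation made just after the algorithm statement: after \autoref[Step]{X} the last $m-r$ rows of the $t$-trailing matrix vanish, hence the corresponding rows of $A$ are divisible by $t$, which is exactly what is required for $T = \diag(\ID[r], t^{-1}\ID[m-r])$. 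When the main loop exits at \autoref[Step]{outer}, the guard $\det \tilde{A}_0 \neq 0$ gives the desired regularity of the $t$-trailing matrix.

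For \textbf{outer-loop termination}, let $V$ denote the left $\K[t,\sigma]$-module generated by the rows of the current $A$. Since $X$ and $Q$ are unimodular over $\K[\sigma]\subseteq\K[t,\sigma]$, they do not affect $V$; only the shift $T$ in \autoref[Step]{inner} can change it, and if it does then it strictly enlarges $V$. The key lemma I would establish is: \emph{if $T$ does not strictly enlarge $V$, then the subsequent reduction in \autoref[Step]{GB} produces $Y = \ZERO$.} Indeed, if $t^{-1}L^{(k)} \in V$, then $t^{-1}L^{(k)} = P\cdot(\text{upper rows}) + Q\cdot L^{(k)}$ for some polynomial matrices $P,Q$; comparing $t$-constant terms (using that $L^{(k)}$ is itself divisible by $t$) forces $\tilde{C}_0 = P_0 \tilde{B}_0$ and hence $Y = \ZERO$. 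Contrapositively, the inner loop can exit ($Y \neq \ZERO$) only after $T$ has strictly enlarged $V$, so each complete outer iteration strictly enlarges $V$. Since $\K[t,\sigma] = \K[t][\sigma;\sigma_0]$ is left Noetherian by the Hilbert basis theorem for skew polynomial rings, $\RV{\K[t,\sigma]}m$ satisfies the ascending chain condition and the outer loop terminates after finitely many iterations.

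\textbf{Inner-loop termination} is the main obstacle. Assuming for contradiction that the inner loop never exits, $Y^{(k)} = \ZERO$ at every step and the recurrence $L^{(k+1)} = t^{-1}L^{(k)} - X^{(k)} U$ holds in $\Mat{\K[t,\sigma]}{(m-r)}m$, where $U$ is the (fixed) upper block. Multiplying by $t^{k+1}$ and telescoping yields
\begin{equation*}
  L^{(0)} = t^N L^{(N)} + \sum_{k=0}^{N-1} t^{k+1} X^{(k)} U \qquad \text{for every } N\geq 1.
\end{equation*}
Passing to the $t$-adic completion $\K[[t]][\sigma]$, the term $t^N L^{(N)}$ tends to $0$ while the partial sums converge to some $S \in \Mat{\K[[t]][\sigma]}{(m-r)}r$; thus $L^{(0)} = S U$ in $\Mat{\K[[t]][\sigma]}{(m-r)}m$, exhibiting each row of $L^{(0)}$ as a $\K((t))[\sigma]$-linear combination of the rows of $U$. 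To derive a contradiction with the $\K(t)[\sigma]$-linear independence of the rows of $A$ guaranteed by regularity, I would invoke that $\K((t))[\sigma] \cong \K((t)) \otimes_{\K(t)} \K(t)[\sigma]$ is flat as a right $\K(t)[\sigma]$-module (because the field extension $\K(t) \subseteq \K((t))$ is flat), so linear independence of the rows of $A$ extends from $\K(t)[\sigma]$ to $\K((t))[\sigma]$ and the relation above cannot exist.
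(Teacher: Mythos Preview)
Your correctness discussion is fine, and your outer-loop argument is valid, though it differs from the paper's. The paper tracks the $\K[\sigma]$-row space of the trailing matrix $\tilde{A}_0$ itself: whenever the algorithm returns to \autoref[Step]{outer} with $Y\neq\ZERO$, the reduced block $Y$ lies outside the row span of $\tilde{B}_0$, so this row space strictly grows inside the Noetherian module $\RV{\K[\sigma]}m$. That is more direct than your detour through the $\K[t,\sigma]$-row module of the whole matrix and the ``key lemma'' (which is correct, but unnecessary once one looks only at the trailing matrix).

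The inner-loop argument, however, has a genuine gap. You pass to $\K[[t]][\sigma]$ and assert that the partial sums $\sum_{k<N} t^{k+1} X^{(k)}$ converge to some $S$ there, but this requires the $\sigma$-degrees of the $X^{(k)}$ to be uniformly bounded, which you have not shown and which in fact fails in general. Concretely, with $m=2$, $B=(1+t\sigma,\,0)$ and $L^{(0)}=(t,\,0)$ (a rank-deficient $A$, but your argument up to the formation of $S$ does not yet use full rank), one computes $X^{(k)}$ of $\sigma$-degree exactly $k$; the resulting series lives only in the full $t$-adic completion $\{\sum_{i\geq0}t^ia_i:a_i\in\K[\sigma]\}$, not in $\K[[t]][\sigma]$ or $\K((t))[\sigma]$. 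Your flatness step then does not apply, and the contradiction is not reached.

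The paper avoids this problem by working in the quotient skew field $\K(t,\sigma)$. Since $A$ has full row rank it is two-sided invertible there (\autoref[Remark]{rem:inverse}); writing $A^{-1}=(P\;\;Q)$ one obtains from the transformation matrices of \autoref[Remark]{rem:trans} the identity $C_\nu Q = t^{-\nu}\ID[m-r]$ with $Q$ fixed and $C_\nu$ polynomial. The contradiction then comes from the $t$-adic valuation on $\K(t,\sigma)$ (what the paper phrases as ``the denominator in $t$''): the left-hand side has valuation bounded below by a constant depending only on $Q$, while the right-hand side has valuation $-\nu\to-\infty$. This sidesteps any need to control $\sigma$-degrees. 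If you want to salvage your power-series approach, you would have to either bound $\deg_\sigma X^{(k)}$ a priori (not obvious) or prove that left linear independence over $\K(t)[\sigma]$ persists over the full $t$-adic completion rather than merely over $\K((t))[\sigma]$.
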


Before we can prove the theorem, we state two simple remarks:

\begin{remark}\label{rem:trans}
  As long as the inner loop runs, we always consider the trailing
  matrix
  \begin{equation*}
    A_0 =
    \begin{pmatrix}
      B_0 \\ C_0
    \end{pmatrix}
  \end{equation*}
  and compute a matrix $X$ (if possible) such that $C_0 = X B_0$. (The
  inner loop terminates if that is not possible any more.) We then
  apply the following transformations to $A$
  \begin{multline*}
    A =
    \begin{pmatrix}
      B \\ C
    \end{pmatrix}
    \xrightarrow{\text{reduction}}
    \begin{pmatrix}
      \ID & \ZERO \\
      -X & \ID
    \end{pmatrix}
    \begin{pmatrix}
      B \\ C
    \end{pmatrix}
    \\
    \xrightarrow{\text{shift}}
    \begin{pmatrix}
      \ID & \ZERO \\
      \ZERO & t^{-1} \ID
    \end{pmatrix}
    \begin{pmatrix}
      \ID & \ZERO \\
      -X & \ID
    \end{pmatrix}
    \begin{pmatrix}
      B \\ C
    \end{pmatrix}
    =
    \begin{pmatrix}
      \ID & \ZERO \\
      -t^{-1} X & t^{-1} \ID
    \end{pmatrix}
    \begin{pmatrix}
      B \\ C
    \end{pmatrix}.
  \end{multline*}
  Thus, we see that the transformation matrix for a single step has
  the form
  \begin{equation*}
    \begin{pmatrix}
      \ID & \ZERO \\
      - t^{-\nu} X & t^{-\nu}
    \end{pmatrix}
  \end{equation*}
  for some $\nu \geq 1$ (where we write $t^{-\nu}$ in the lower right
  block instead of $t^{-\nu} \ID$). Moreover,
  \begin{equation*}
    \begin{pmatrix}
      \ID & \ZERO \\
      - t^{-\mu} X & t^{-\mu}
    \end{pmatrix}    
    \begin{pmatrix}
      \ID & \ZERO \\
      - t^{-\nu} \tilde{X} & t^{-\nu}
    \end{pmatrix}    
    =
    \begin{pmatrix}
      \ID & \ZERO \\
      - t^{-(\mu+\nu)}(t^\nu X +  \tilde{X}) & t^{-(\mu+\nu)}
    \end{pmatrix}
  \end{equation*}
  for all $\mu,\nu \geq 0$ and matrices $X, \tilde{X} \in
  \Mat{\K[t,\sigma]}{(m-r)}r$. Thus, we easily see that all
  transformation matrices in the inner loop must be of this shape.
\end{remark}

\begin{remark}\label{rem:inverse}
  Let $A \in \Mat{\K[t,\sigma]}mm$ have full (left) row rank. We want
  to prove that $A$ has a (two-sided) inverse in the quotient skew
  field $\K(t,\sigma)$. We first remark that we can embed
  $\K[t,\sigma]$ into the non-commutative Euclidean domain
  $\K(t)[\sigma]$. Over $\K(t)[\sigma]$, we can compute the \JNF\
  \begin{equation*}
    N = \diag(e_1,e_2,\ldots,e_m) = S A T
  \end{equation*}
  of $A$ where $S, T \in \MatGr{\K(t)[\sigma]}m$ are unimodular (see,
  \eg,~\cite[Thm.~8.1.1]{cohn}). The diagonal entries $e_1,\ldots,e_m$
  cannot be zero since otherwise $A$ did not have full row rank. Thus,
  the inverse $N^{-1} = \diag(e_1^{-1},\ldots,e_m^{-1}) \in
  \Mat{\K(t,\sigma)}mm$ of $N$ exists. We obtain
  \begin{equation*}
    \ID = N^{-1} S A T, 
    \qqtext{or, equivalently,}
    S^{-1} N T^{-1} = A.
  \end{equation*}
  Since $S^{-1} N T^{-1}$ is the product of (from both sides)
  invertible matrices, we conclude that $A$ is invertible.
\end{remark}

\begin{proof}[Proof of {\autoref[Theorem]{thm:lambda}}]
  If the algorithm terminates, then the trailing matrix $\tilde{A}_0$
  is in \PNF\ which implies that it has full rank (over $\K[\sigma]$)
  which in turn implies that $\det \tilde{A}_0 \neq 0$ as a polynomial
  in $\K[\sigma]$.

  \smallskip

  It remains to reason about the termination of the algorithm. First,
  we note that the outer loop starting in \autoref[Step]{outer} cannot be
  reached infinitely often: A new \PNF\ is only computed if the lower
  rows of $\tilde{A}_0$ are not in the row space of the upper
  block. This implies that we either gain new nonzero rows in the
  \PNF\ or that the degrees or positions of the leading monomials
  decrease. In both cases, the module generated by the rows of
  $\tilde{A}_0$ becomes strictly larger which can happen only finitely
  often since $\RV{\K(t)[\sigma]}m$ is noetherian.

  Second, we have to show that the inner loop starting in
  \autoref[Step]{inner} cannot be run infinitely often. For this, assume
  that the matrix
  \begin{equation*}
    A =
    \begin{pmatrix}
      B\\
      C
    \end{pmatrix}
    \in \Mat{\K[t,\sigma]}mm
  \end{equation*}
  with $B \in \Mat{\K[t,\sigma]}rm$ and $C \in
  \Mat{\K[t,\sigma]}{(m-r)}m$ has full (row) rank and assume that the
  inner loop repeats infinitely, \ie, assume that for every $\nu \geq
  0$ there are $C_\nu \in \Mat{\K[t,\sigma]}{(m-r)}m$ and $X_\nu \in
  \Mat{\K[t,\sigma]}{(m-r)}{r}$ such that
  \begin{equation*}
    \begin{pmatrix}
      \ID[r] & \ZERO \\
      t^{-\nu} X_\nu & t^{-\nu} 
    \end{pmatrix}
    \begin{pmatrix}
      B \\
      C
    \end{pmatrix}
    =
    \begin{pmatrix}
      B \\
      C_\nu
    \end{pmatrix}
  \end{equation*}
  (where we write $t^{-\nu}$ instead of $t^{-\nu} \ID[m-r]$ for the
  lower left block). We explained why the transformation matrices
  always look like this in \autoref[Remark]{rem:trans} above.

  We now form the quotient (skew) field $\K(t,\sigma)$ of
  $\K[t,\sigma]$. Since $A$ has full row rank, it does have a
  (two-sided) inverse by \autoref[Remark]{rem:inverse}. Thus, the
  equation above is equivalent to the identity
  \begin{equation*}
    \begin{pmatrix}
      \ID & \ZERO \\
      t^{-\nu} X_\nu & t^{-\nu} 
    \end{pmatrix}
    =
    \begin{pmatrix}
      B \\
      C_\nu
    \end{pmatrix}
    A^{-1}
    =
    \begin{pmatrix}
      B P & B Q \\
      C_\nu P & C_\nu Q
    \end{pmatrix}
  \end{equation*}
  where we write
  \begin{math}
    A^{-1} =
    \begin{pmatrix}
      P & Q
    \end{pmatrix}
  \end{math}
  with $P \in \Mat{\K(t,\sigma)}mr$ and $Q \in
  \Mat{\K(t,\sigma)}m{(m-r)}$. In particular, we have the identity
  \begin{math}
    C_\nu Q = t^{-\nu}.
  \end{math}
  Note that $Q$ does not depend on $\nu$, \ie, the denominator in $t$
  of $Q$ is the same for every $\nu \geq 0$. In addition, $C_\nu$ is a
  polynomial matrix, \ie, its denominator is always $1$. Thus, the
  denominator on the \LHS\ is bounded. However, the denominator of the
  \RHS\ is not. This is a contradiction. Hence, there cannot be
  infinitely many steps where $C_\nu$ is in the row space of $B$.
\end{proof}

\medskip

An analogous method works for the $t$-leading matrix. The only
differences to \autoref[Algorithm]{alg:lambda} are that we have to
work with the \PNF\ of the $t$-leading matrix $\tilde{A}_\ell$ in
\autoref[Step]{inner'} and that we shift the lower rows by $t$ instead
of $t^{-1}$ in \autoref[Step]{inner'}. Again, we state the algorithm
without explicit computation of the transformation matrix.

\begin{algorithm}\label{alg:rho}~
    \begin{description}
  \item[Input:] A regular matrix $A \in \Mat{\K[t,\sigma]}mm$.
  \item[Output:] An equivalent matrix $B \in \Mat{\K[t,\sigma]}mm$
    such that the $t$-leading matrix of $B$ is regular.
  \item[Procedure:]
    \begin{enumerate}
    \item\label{outer'} Compute the $t$-leading matrix
      $\tilde{A}_\ell$ of $A$. If $\det \tilde{A}_\ell \neq 0$, then
      return $A$.
    \item Else, compute the \PNF\ of $\tilde{A}_\ell$ and let $X \in
      \MatGr{\K[\sigma]}m$ be the corresponding transformation matrix
      and let $r$ be the rank of $\tilde{A}_\ell$. Set $A \becomes X
      A$ and write the new trailing matrix of $A$ as
      \begin{equation*} \tilde{A}_\ell =
        \begin{pmatrix} \tilde{B}_\ell \\ \ZERO
        \end{pmatrix}
      \end{equation*} where $\tilde{B}_\ell \in \Mat{\K[\sigma]}rm$ is in
      \PNF.
    \item\label{inner'} Let $T = \diag(\ID[r], t \ID[m-r])$ and set $A
      \becomes T A$.\footnote{This shifts the lower $m-r$ rows of $A$
        by $t$.} The new trailing matrix is now
      \begin{equation*} \tilde{A}_\ell =
        \begin{pmatrix} \tilde{B}_\ell \\ \tilde{C}_\ell
        \end{pmatrix}
      \end{equation*} 
      with the same $\tilde{B}_\ell$ as before and some
      matrix $\tilde{C}_\ell \in \Mat{\K[\sigma]}{(m-r)}m$.
    \item Use \GBs\ reduction of $\tilde{C}_\ell$ by $\tilde{B}_\ell$
      trying to eliminate all the rows in $\tilde{C}_\ell$. Write the
      result as $\tilde{C}_\ell = X \tilde{B}_\ell + Y$ with a matrix
      $X \in \Mat{\K[\sigma]}rr$ and a matrix $Y \in
      \Mat{\K[\sigma]}{(m-r)}m$. Let
      \begin{equation*} Q =
        \begin{pmatrix} \ID[r] & \ZERO \\ -X & \ID[m-r]
        \end{pmatrix}
      \end{equation*} 
      and set $A \becomes Q A$.
    \item If $Y = \ZERO$, then continue the inner loop with
      \autoref[Step]{inner'}. Else, go back to the outer loop in
      \autoref[Step]{outer'}.
    \end{enumerate}
  \end{description}
\end{algorithm}

\begin{theorem}\label{thm:rho}
  \autoref[Algorithm]{alg:rho} is correct and terminates.
\end{theorem}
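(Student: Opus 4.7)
I plan to mirror the proof of Theorem~\ref{thm:lambda}, swapping the $t$-trailing matrix for the $t$-leading one and the $t$-adic valuation at $t=0$ for the valuation at $t=\infty$. Correctness is immediate: when the algorithm returns in Step~\ref{outer'}, we have $\det \tilde{A}_\ell \neq 0$ by design. Termination of the outer loop is proved exactly as in Theorem~\ref{thm:lambda}: each return to Step~\ref{outer'} happens because $Y \neq \ZERO$, which strictly enlarges the $\K(t)[\sigma]$-row module generated by the rows of $\tilde{A}_\ell$ in $\RV{\K(t)[\sigma]}m$, and this can only occur finitely often by Noetherianity.

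For the inner loop I would argue by contradiction. A routine block-matrix induction (the analogue of Remark~\ref{rem:trans}) shows that the single-step transformation $\bigl(\begin{smallmatrix} \ID & \ZERO \\ -X & t\ID \end{smallmatrix}\bigr)$ composes over $\nu$ inner iterations into a matrix of the form $T_\nu = \bigl(\begin{smallmatrix} \ID & \ZERO \\ -G_\nu & t^\nu \ID \end{smallmatrix}\bigr)$ with $G_\nu \in \Mat{\K[t,\sigma]}{(m-r)}{r}$ (in fact $\deg_t G_\nu \leq \nu - 1$, though only the $t^\nu$ in the lower-right block matters here). Using Remark~\ref{rem:inverse} to write $A^{-1} = (P \mid Q)$ in the skew field $\K(t,\sigma)$, the identity $T_\nu = \bigl(\begin{smallmatrix} B \\ C_\nu \end{smallmatrix}\bigr) A^{-1}$ yields $C_\nu Q = t^\nu \ID[m-r]$ in its lower-right block.

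The essential new ingredient is the bound $\deg_t C_\nu \leq \ell - 1$ for every $\nu$, which replaces the ``polynomial implies $t$-denominator equal to $1$'' observation from Theorem~\ref{thm:lambda}. This I would prove by induction: after the \PNF-step in Algorithm~\ref{alg:rho} the lower rows have vanishing $t^\ell$-coefficient, so $\deg_t C_0 \leq \ell - 1$; each successful inner iteration first multiplies the lower rows by $t$ (so $\deg_t$ becomes at most $\ell$) and then subtracts a product $XB$ with $X \in \Mat{\K[\sigma]}{(m-r)}{r}$ chosen precisely to kill the new $t^\ell$-coefficient, and since $\deg_t (XB) = \ell$ the difference is again of $t$-degree at most $\ell - 1$.

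The contradiction now comes from the valuation $v_{1/t}$ on $\K(t,\sigma)$ (well-defined and compatible with $\sigma t = q t \sigma$, and coinciding with $-\deg_t$ on polynomial matrices): applying $v_{1/t}$ to both sides of $C_\nu Q = t^\nu \ID$ gives
\begin{equation*}
  -\nu \;=\; v_{1/t}(t^\nu \ID) \;\geq\; v_{1/t}(C_\nu) + v_{1/t}(Q) \;\geq\; -(\ell - 1) + v_{1/t}(Q),
\end{equation*}
a constant independent of $\nu$. Hence $\nu \leq \ell - 1 - v_{1/t}(Q)$, contradicting the assumption of an infinite inner loop. The main obstacle is spotting the correct dual invariant: the bounded-denominator reasoning of Theorem~\ref{thm:lambda} does not apply as-is, and it is the bound $\deg_t C_\nu \leq \ell - 1$---which itself relies on the specific step structure of the inner loop---that takes its place here.
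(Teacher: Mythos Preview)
Your proof is correct and follows essentially the same route as the paper's: mirror the argument of Theorem~\ref{thm:lambda}, obtain the block identity $C_\nu Q = t^\nu \ID$, and derive a contradiction from a bounded $t$-degree on the left against an unbounded one on the right. The paper phrases the key invariant as ``the degree in $t$ of $A$ stays equal to $\ell$ throughout'' (hence $\deg_t C_\nu \leq \ell$), whereas you prove the slightly sharper $\deg_t C_\nu \leq \ell-1$ by tracking the shift--reduce steps explicitly and cast the final contradiction via the valuation $v_{1/t}$; these are expository refinements of the same idea rather than a different argument.
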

\begin{proof}
  The proof is mostly analogous to that of
  \autoref[Theorem]{thm:lambda}. The only noticeable change is that
  for the termination of the inner loop we have to deal with
  transformation matrices of the shape
  \begin{equation*}
    \begin{pmatrix}
      \ID & \ZERO \\
      t^\nu X_\nu & t^\nu
    \end{pmatrix}
  \end{equation*}
  (with some $X_\nu \in \Mat{\K[t,\sigma]}{(m-r)}r$) instead of having
  fractions in $t$. However, we come to an analogous equation $C_\nu Q
  = t^\nu$ where the degree of the \LHS\ is bounded while the degree
  of the \RHS\ is not. (It is easy to check that the degree in $t$ of
  $A$ during execution of \autoref[Algorithm]{alg:rho} is always equal
  to $\ell$). Thus, we arrive at a similar contradiction as for the
  proof of \autoref[Theorem]{thm:lambda}.
\end{proof}

\begin{example}
  We continue~\autoref[Example]{ex:deg}. The $t$-leading matrix of the
  system was
  \begin{equation*}
    \begin{pmatrix}
      0 & 0 \\
      0 & -64
    \end{pmatrix}
  \end{equation*}
  which is not regular. Thus, we will apply
  \autoref[Algorithm]{alg:rho}. The \PNF\ of the $t$-leading matrix is
  simply
  \begin{equation*}
    \begin{pmatrix}
      0 & 64 \\
      0 & 0
    \end{pmatrix}
    \qqtext{with transformation}
    X =
    \begin{pmatrix}
      0 & -1 \\
      1 & 0
    \end{pmatrix}.
  \end{equation*}
  (Actually, for the proper \PNF\ we would need to divide the first
  row by $64$; however, we want to avoid fractions in order to save
  some space.) We obtain the new system matrix
  \begin{equation*}
    \scriptsize
    A \becomes X A =
    \begin{pmatrix}
      -\sigma^2
      + (16 t^2 - 16 t + 12) \sigma
      + (-128 t^2 + 64 t - 32)
      &
      -8 \sigma
      + (64 t^4 + 8) 
      \\
      \sigma^2 
      + (-16 t + 4) \sigma
      + (128 t - 32)
      & 
      8 \sigma
      + (-64 t^3 - 8).
    \end{pmatrix}
  \end{equation*}
  Next, we multiply the lower row by $t$ which leads to the new
  $t$-leading matrix
  \begin{equation*}
    \begin{pmatrix}
      0 & 64 \\
      0 & -64
    \end{pmatrix}
  \end{equation*}
  which does not have full rank yet. \GB\ reduction amounts to adding
  the upper row to the lower. We obtain the new system matrix
  \begin{equation*}
    \scriptsize
    \begin{pmatrix}
      -\sigma^2
      + (16 t^2 - 16 t + 12) \sigma
      + (-128 t^2 + 64 t - 32)
      &
      -8 \sigma
      + (64 t^4 + 8) 
      \\
      (t - 1)\sigma^2 
      + (12 - 12 t) \sigma
      + (32 t - 32)
      &
      (8 t - 8) \sigma
      + (8 - 8 t)
    \end{pmatrix}.
  \end{equation*}
  It turns out that we have to shift the lower row by $t$ three times
  before the $t$-leading matrix changes again (since the lower row is
  only linear in $t$). The final result is the system matrix
  \begin{equation*}
    \scriptsize
    \begin{pmatrix}
      -\sigma^2
      + (16 t^2 - 16 t + 12) \sigma
      + (-128 t^2 + 64 t - 32)
      &
      -8 \sigma
      + (64 t^4 + 8) 
      \\
      (t^4 - t^3)\sigma^2 
      + (12 t^3 - 12 t^4) \sigma
      + (32 t^4 - 32 t^3)
      &
      (8 t^4 - 8 t^3) \sigma
      + (8 t^3 - 8 t^4)
    \end{pmatrix}
  \end{equation*}
  with $t$-leading matrix
  \begin{equation*}
    \begin{pmatrix}
      0 & 64 \\
      \sigma^2 - 12 \sigma + 32 & 8 \sigma - 8
    \end{pmatrix}.
  \end{equation*}
  This yields $\rho = -64 \sigma^2 + 768 \sigma - 2048$ with roots $4
  = q^2$ and $8 = q^3$. Thus, the degree of polynomial solutions is at
  most $3$. From \autoref[Example]{ex:bound} we do know that the solution space
  is spanned by $(t^3, 1)$ and $(t^2, 1)$; so the bound is actually
  sharp in this case.
\end{example}

\section{Conclusions}\label{sec:concl}

In this paper we have presented a method for determining degree bounds
for the polynomial solutions and for determining the power of $t$ in
the denominator bound for rational solutions of $q$-recurrence
systems. Although the translation from the scalar case seems to be
straight-forward at first, we quickly discovered a problem when
certain determinants are vanishing. This required us to develop a new
method for transforming the system into an equivalent form where those
determinants are non-zero.

There exist other ways of obtaining the same information, as for
instance uncoupling or transformation of the system into first
order. However, those approaches are often computationally costly
(since uncoupling is and since conversion to first order usually
introduces a lot of new variables). The method presented in this paper
does avoid those problems. 

We do not have strict bounds on the complexity of the transformation
algorithm yet; we hope to deliver those in the near future. However,
in all the examples which we have computed, termination usually
occurred within a very small number of steps. Moreover, the degree
bounds found were usually tight. Although we did not yet carry out
extensive comparisons with the existing methods mentioned above, we
are therefore confident that this method will prove to be useful in
practical applications.

%% \bibliographystyle{amsplain}
%% \bibliography{library}

\providecommand{\bysame}{\leavevmode\hbox to3em{\hrulefill}\thinspace}
\providecommand{\MR}{\relax\ifhmode\unskip\space\fi MR }
% \MRhref is called by the amsart/book/proc definition of \MR.
\providecommand{\MRhref}[2]{%
  \href{http://www.ams.org/mathscinet-getitem?mr=#1}{#2}
}
\providecommand{\href}[2]{#2}

\end{document}